\newtheorem{theorem}{Theorem}
\newcommand{\Theorem}[1]{Theorem~\ref{thm:#1}}
\newcommand{\TheoremName}[1]{\label{thm:#1}}
\newtheorem{claim}[theorem]{Claim}
\newcommand{\ClaimName}[1]{\label{clm:#1}}
\newcommand{\Claim}[1]{Claim~\ref{clm:#1}}
\newcommand{\SectionName}[1]{\label{sec:#1}}
\newcommand{\EquationName}[1]{\label{eq:#1}}
\newcommand{\set}[1]{\left \{ #1 \right \}}                     % Set notation: { ... }
\newcommand{\setst}[2]{\left\{\; #1 \,:\, #2 \;\right\}}        % Set notation: { ... | ... }
\renewcommand{\th}{\ifmmode{^{\textrm{th}}}\else{\textsuperscript{th}\ }\fi}
\newcommand{\norm}[1]{\left\lVert #1 \right\rVert}
\newcommand{\abs}[1]{\lvert #1 \rvert}
\newcommand{\card}[1]{\abs{#1}}
\newcommand{\floor}[1]{\left\lfloor #1 \right\rfloor}
\newcommand{\smallsum}[2]{{\textstyle \sum_{#1}^{#2}}}
\newcommand{\defeq}{\,:=\,}                                     % Equals for use in definitions
\newcommand{\Union}{\bigcup}
\newcommand{\comment}[1]{}
\newcommand{\prob}[1]{\operatorname{Pr}\left[\,#1\,\right]}               % Probability: Pr[ .. ] 
\newcommand{\bC}{\mathbb{C}}
\newcommand{\bR}{\mathbb{R}}
\newcommand{\cC}{\mathcal{C}}
\newcommand{\cE}{\mathcal{E}}
\newcommand{\cF}{\mathcal{F}}
\newcommand{\cY}{\mathcal{Y}}
\newtheorem{conj}[theorem]{Conjecture}
\DeclareRobustCommand{\myfn}{\textsuperscript{\fnsymbol{footnote}}}
\title[A note on the discrepancy of matrices with bounded row and column sums]
      {A note on the discrepancy of matrices \\ with bounded row and column sums}
\author{Nicholas J. A. Harvey\myfn}
\begin{document}
\maketitle 
\renewcommand{\thefootnote}{\fnsymbol{footnote}}
\footnotetext[1]{
    Department of Computer Science, University of British Columbia.
    Email: \texttt{nickhar@cs.ubc.ca}.
    Supported by an NSERC Discovery Grant and a Sloan Foundation Fellowship.
}
\renewcommand{\thefootnote}{\arabic{footnote}}

%%%%%%%%%%%%%%%%%%%%%%%%%%%%%%%%%%%%%%%%%%%%%%%%%%%%%%%%%%%%%%%%%%%%%%%%%%%%%%%%

\begin{abstract}
A folklore result uses the Lov\'asz local lemma to analyze the discrepancy of
hypergraphs with bounded degree and edge size.
We generalize this result to the context of 
real matrices with bounded row and column sums.
\end{abstract}

%%%%%%%%%%%%%%%%%%%%%%%%%%%%%%%%%%%%%%%%%%%%%%%%%%%%%%%%%%%%%%%%%%%%%%%%%%%%%%%%
\section{Introduction}

In combinatorics, discrepancy theory is the study of red-blue colorings
of a hypergraph's vertices such that every hyperedge contains a roughly equal number of red and blue
vertices.
A classic survey on this topic is \cite{BeckSos}.

Many combinatorial discrepancy results have a more general form as a geometric statement about
discrepancy of real vectors \cite[\S 4]{BeckSos}.
Some examples include the Beck-Fiala theorem \cite{BeckFiala}
and Spencer's ``six standard deviations'' theorem \cite{Spencer}.
One exception is the following folklore result on the discrepancy of hypergraphs of bounded degree
and edge size \cite[pp.~693]{Srinivasan} \cite[Proposition 12]{BPRS}.

\begin{theorem}
\TheoremName{folklore}
Let $H$ be a hypergraph of maximum degree $\Delta$ and maximum edge size $R$.
Then there is a red-blue coloring of the vertices such that, for every edge $e$,
the numbers of red and blue vertices in $e$ differ by at most $2 \sqrt{R \ln(R \Delta)}$.
\end{theorem}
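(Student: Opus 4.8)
The plan is to apply the symmetric Lov\'asz Local Lemma (LLL) to a uniformly random red--blue coloring --- the template behind essentially every proof of this folklore fact.

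First I would color each vertex $v$ independently and uniformly at random, recording the choice as a sign $\sigma_v \in \{-1,+1\}$ (say $+1$ for red). For an edge $e$ the difference between the numbers of red and blue vertices in $e$ is $X_e \defeq \sum_{v \in e} \sigma_v$, a sum of at most $R$ independent random signs, so Hoeffding's inequality gives $\prob{\abs{X_e} > \lambda} \le 2 e^{-\lambda^2/(2\abs{e})} \le 2 e^{-\lambda^2/(2R)}$ for every $\lambda \ge 0$. Choosing $\lambda \defeq 2\sqrt{R\ln(R\Delta)}$ makes $\lambda^2/(2R) = 2\ln(R\Delta)$, so the bad event $A_e \defeq \set{\abs{X_e} > \lambda}$ satisfies $\prob{A_e} \le p$ with $p \defeq 2(R\Delta)^{-2}$. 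The aim is to show that with positive probability none of the $A_e$ occurs, since then the associated coloring has $\abs{X_e} \le 2\sqrt{R\ln(R\Delta)}$ for every edge.

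Next I would bound the dependency degree. Because $A_e$ depends only on $\setst{\sigma_v}{v \in e}$, it is mutually independent of the family $\setst{A_f}{f \cap e = \emptyset}$. Each vertex of $e$ lies in at most $\Delta$ edges, hence in at most $\Delta-1$ edges other than $e$, and $\abs{e} \le R$, so at most $R(\Delta-1)$ edges $f \ne e$ meet $e$; thus $A_e$ depends on at most $d \defeq R(\Delta-1)$ others. The symmetric LLL applies once $e\,p\,(d+1) \le 1$, and using $R \ge 1$ we get $e\,p\,(d+1) \le 2e\,(R\Delta)^{-2}(R\Delta - R + 1) \le 2e/(R\Delta)$, which is at most $1$ as soon as $R\Delta \ge 6$. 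Hence for $R\Delta \ge 6$ the LLL produces the desired coloring (and a Moser--Tardos argument would make it algorithmic, though only existence is claimed).

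Finally I would clear away the cases $R\Delta \le 5$, which is precisely where the deliberately generous constant $2$ is spent: if $R \le 4\ln(R\Delta)$ then $2\sqrt{R\ln(R\Delta)} \ge R$, and any coloring works because no edge has more than $R$ vertices; otherwise $R > 4\ln(R\Delta) \ge 4\ln R$, and combined with $R\Delta \le 5$ and $\Delta \ge 1$ a short check forces $R = \Delta = 1$, the degenerate ``disjoint loops'' instance one normally excludes by assuming edges have size at least $2$, which I would simply note. The main obstacle is thus not conceptual --- random coloring plus LLL is routine --- but bookkeeping: making the constant in $2\sqrt{R\ln(R\Delta)}$ hold for \emph{all} $R,\Delta$ at once, with the LLL computation covering $R\Delta \ge 6$ and the crude ``discrepancy $\le$ edge size'' bound covering the rest. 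The generalization to real matrices with bounded row and column sums should follow the same two-part plan, with $X_e$ replaced by a weighted sum of signs, Hoeffding applied to that weighted sum, and the dependency degree controlled by the column-sum bound.
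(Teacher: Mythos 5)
Your proof is correct and is exactly the ``short exercise using the Lov\'asz local lemma'' that the paper alludes to without writing out: uniform random signs, Hoeffding giving $\prob{A_e}\le 2(R\Delta)^{-2}$, dependency degree at most $R(\Delta-1)$, and the symmetric LLL condition $e\,p\,(d+1)\le 1$ holding once $R\Delta\ge 6$, with the trivial ``discrepancy at most $\abs{e}\le R$'' bound absorbing the small cases. Your side remark that the literal statement fails for the degenerate instance $R=\Delta=1$ (where the claimed bound is $0$ but a singleton edge has discrepancy $1$) is accurate and worth the footnote you give it.
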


The proof is a short exercise using the Lov\'asz local lemma.

We show that this theorem also has a more general form as a geometric statement
about discrepancy of real vectors.
\Theorem{newthm} recovers \Theorem{folklore} (up to constants)
by letting $V_{i,j} \in \set{0,1}$ indicate whether vertex $j$ is contained in edge $i$.
Let $v^i$ denote the $i\th$ row of $V$ and $v_j$ denote the $j\th$ column of $V$.
As usual, let $[n] = \set{1,\ldots,n}$ and let $\norm{\cdot}_p$ denote the $\ell_p$-norm.

\begin{theorem}
\TheoremName{newthm}
Let $V$ be an $n \times m$ real matrix with $\abs{V_{i,j}} \leq 1$,
$\norm{ v^i }_1 \leq R$, and $\norm{v_j}_1 \leq \Delta$ for all $i \in [n], j \in [m]$.
Assume that $R \geq \max \set{ \Delta, 4 }$ and $\Delta \geq 2$.
There exists $y \in \set{-1,+1}^m$ with 
$\norm{ V y }_\infty \leq O(\sqrt{R \log(R \Delta)})$.
\end{theorem}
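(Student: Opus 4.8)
The plan is to imitate the folklore proof of \Theorem{folklore} — pick $y\in\set{-1,+1}^m$ uniformly at random and apply the Lov\'asz local lemma — but with a family of bad events finer than one per row, so that the analysis can exploit the bounded $\ell_1$‑norms rather than bounded supports. For each row $i$ and each integer $k\ge 0$, let $v^i_{(k)}$ be the dyadic block of $v^i$ consisting of the entries with $2^{-k-1}<\abs{V_{i,j}}\le 2^{-k}$, and set $X_i^{(k)}=\sum_j (v^i_{(k)})_j\,y_j$, so that $(Vy)_i=\sum_{k\ge 0}X_i^{(k)}$. Since $\norm{v^i}_1\le R$, block $k$ of row $i$ has at most $2^{k+1}R$ nonzero entries, each of absolute value at most $2^{-k}$, so $\mathrm{Var}(X_i^{(k)})=\sum_j(v^i_{(k)})_j^2\le 2^{1-k}R$; likewise each column has at most $2^{k+1}\Delta$ entries in block $k$. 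Define the bad event $A_i^{(k)}$ to be $\abs{X_i^{(k)}}>t_k$ with $t_k\asymp\sqrt{2^{-k}R\,(k+\log(R\Delta))}$. If no bad event occurs then $\norm{Vy}_\infty\le\sum_k t_k=O\bigl(\sqrt R\sum_k 2^{-k/2}\sqrt{k+\log(R\Delta)}\bigr)=O(\sqrt{R\log(R\Delta)})$, because the series converges.

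The concrete steps would be: (i) bound $\Pr[A_i^{(k)}]$ by Hoeffding's inequality using the variance estimate above, giving $\Pr[A_i^{(k)}]\le 2\exp\bigl(-\Omega(2^k t_k^2/R)\bigr)$; (ii) record the dependency structure — $A_i^{(k)}$ depends only on those $y_j$ with $V_{i,j}$ in block $k$, at most $2^{k+1}R$ of them, and each such $j$ is shared with at most $2^{k'+1}\Delta$ rows through their block‑$k'$ entries; (iii) invoke the asymmetric local lemma, assigning event $A_i^{(k)}$ a weight that decays geometrically in the scale $k$, and verify $\Pr[A_i^{(k)}]\le x_i^{(k)}\prod_{A_{i'}^{(k')}\sim A_i^{(k)}}(1-x_{i'}^{(k')})$ for the chosen $t_k$. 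When $V$ is $0/1$‑valued only the scale $k=0$ is relevant and this collapses exactly to the folklore LLL argument.

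The main obstacle is step (iii) in the presence of very small entries: a row or column may contain arbitrarily many tiny entries, so a high‑scale event $A_i^{(k)}$ can touch up to $2^{k+1}R\to\infty$ columns and hence, through them, unboundedly many other events; the naive weighting then forces every $t_k$ to exceed a fixed positive constant times $R$, and $\sum_k t_k$ diverges. To get around this I would first reduce to a matrix with only finitely many active scales — e.g.\ by zeroing out all entries of absolute value below a threshold $\tau$ depending only on $R$ and $\Delta$, leaving at most $O(\log(R/\tau))=O(\log(R\Delta))$ scales — and then argue that this truncation changes $\norm{Vy}_\infty$ by only $O(\sqrt{R\log(R\Delta)})$. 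This last argument is where I expect the real work to be, since the discarded entries can still carry $\ell_1$‑mass up to $R$ in a row: one cannot simply drop them, yet folding them back into the probabilistic analysis reintroduces the dependency blow‑up. A plausible device for the sub‑threshold part is to round it to a $0/1$ matrix with comparable row and column sums, using the total unimodularity of the incidence matrix of the row/column‑sum constraints so that the relevant polytope is integral, and then apply \Theorem{folklore} to that piece. Calibrating $\tau$ so that the surviving scales and the rounded tail each contribute only $O(\sqrt{R\log(R\Delta)})$, while keeping all LLL dependency degrees polynomial in $R\Delta$, is the heart of the matter.
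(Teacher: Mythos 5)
Your setup---uniformly random signs, dyadic stratification of each row into blocks of comparable magnitude, one bad event per (row, scale), and the asymmetric local lemma---is exactly the paper's framework, and you correctly isolate the one place where it can fail: an event at scale $k$ may involve up to $2^{k+1}R$ columns, so with a purely scale-dependent threshold $t_k$ the dependency sum blows up as $k\to\infty$ and the LLL condition cannot be verified beyond $k\approx\log(R\Delta)$. But the resolution is not truncation. The paper's fix is to make the threshold for the block $S_{i,k}$ equal to $\epsilon\sum_{j\in S_{i,k}}A_{i,j}+\alpha 2^{-k/2}$ (after rescaling so that entries are at most $\beta=1/R$ and row sums at most $1$), i.e., to add a term proportional to the block's own $\ell_1$-mass. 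Summed over $k$ this extra term costs only $\epsilon$ times the row sum, so the final bound is unaffected; but in the Hoeffding bound it contributes a factor $\exp(-\epsilon^2\card{S_{i,k}}/8)$ that decays exponentially in the \emph{number of entries} in the block. This is precisely what absorbs the LLL penalty: the event $\cE_{i,k}$ touches $\card{S_{i,k}}$ columns, each column contributes at most $2\beta$ to the sum of the weights of neighboring events (the $2^{-k}$ decay of the weights cancels the roughly $2^{k+1}\delta$ count of a column's scale-$k$ entries), so $\prod_{\cF\in\Gamma(\cE_{i,k})}(1-x(\cF))\ge\exp(-4\card{S_{i,k}}\beta)$, and the inequality $\epsilon^2/16\ge 4\beta$ closes the loop. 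No scale ever needs to be discarded.

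Your fallback---truncate below $\tau$, keep $O(\log(R/\tau))$ scales, and handle the tail by rounding it to a $0/1$ matrix via total unimodularity and applying \Theorem{folklore}---is not a proof as it stands, and I do not see how to make it one. A signing with low discrepancy for the rounded $0/1$ matrix gives no control on the discrepancy of the original real-valued tail, since the two matrices differ entrywise by up to $1$ in as many as $R/\tau$ positions per row; and in any case you must produce a \emph{single} $y$ that works simultaneously for the surviving scales and for the tail, which a two-stage argument of this kind does not deliver. So the proposal correctly identifies the obstacle but is missing the key idea (the mass-proportional term in the per-block threshold), and the proposed workaround has a genuine gap at the step you yourself flag as ``the heart of the matter.''
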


%%%%%%%%%%%%%%%%%%%%%%%%%%%%%%%%%%%%%%%%%%%%%%%%%%%%%%%%%%%%%%%%%%%%%%%%%%%%%%%%
\section{The Proof}
\SectionName{LLL}

\Theorem{newthm} follows as an easy corollary of the next theorem,
by rescaling the vectors and separately considering the positive and negative coordinates.
Let $\lg x$ denote the base-2 logarithm of $x$.

\begin{theorem}
\TheoremName{weakbeckfiala}
Let $A$ be a non-negative real matrix of size $n \times m$,
and let $a_1,\ldots,a_m \in \bR^n_{\geq 0}$ denote its columns.
Assume that $\beta \leq \min \set{ \delta/2, 1/4 }$ and $\delta \leq 1$.
Suppose that
\begin{itemize}
\item $\norm{\sum_j a_j}_\infty \leq 1$,
\item $\norm{a_j}_\infty \leq \beta$ for every $j$, and
\item $\norm{a_j}_1 \leq \delta$ for every $j$.
\end{itemize}
Define $\alpha := \sqrt{\lg(\delta/\beta^2)} \geq \sqrt{2}$.
Then there exists a vector $y \in \set{-1,+1}^m$ such that
$$
    \norm{ A y }_\infty ~\leq~ 16 \alpha \sqrt{\beta}.
$$
\end{theorem}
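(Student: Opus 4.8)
The plan is to pick $y\in\set{-1,+1}^m$ uniformly at random and apply the Lov\'asz local lemma, in the spirit of the proof of \Theorem{folklore}, but only after decomposing $A$ according to the dyadic scale of its entries. Two preliminary reductions come first. If $16\alpha\sqrt\beta\ge 1$ the conclusion is immediate: take $y=(1,\dots,1)$; since $A\ge 0$ and $\norm{\sum_j a_j}_\infty\le 1$, every coordinate of $Ay$ lies in $[0,1]$. So assume $16\alpha\sqrt\beta<1$, which (using $\beta\le 1/4$) already forces $\sqrt\beta$ to be small. Next, for $k\ge 0$ let $A_k$ be $A$ with only the entries in $(\beta 2^{-k-1},\,\beta 2^{-k}]$ retained, truncating at a scale $K$ chosen large enough that the discarded entries contribute less than $\alpha\sqrt\beta$ to every row sum (possible since $A$ is a fixed finite matrix, and $K$ will not appear in the final bound). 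Each $A_k$ is $\beta 2^{-k}$ times a $0$--$1$ matrix, and the hypotheses force at most $2^{k+1}/\beta$ nonzeros per row of $A_k$ and at most $2^{k+1}\delta/\beta$ per column --- this boundedness is what the local lemma will consume.

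For a fixed row $i$, the coordinate $(A_ky)_i$ is a sum of independent mean-zero variables of absolute value $\le\beta 2^{-k}$ with total variance $\sum_j (A_k)_{ij}^2\le \beta 2^{-k}\norm{(A_k)_{i,\cdot}}_1\le\beta 2^{-k}$, so a Bernstein bound gives $\prob{\,|(A_ky)_i|>b_k\,}\le 2\exp\!\big(-\Omega(2^k b_k^2/\beta)\big)$ whenever $b_k$ is at most a constant. I would take $b_k\asymp\sqrt{\beta 2^{-k}\,\lg(4^k\delta/\beta^2)}=\sqrt{\beta 2^{-k}}\,\sqrt{\alpha^2+2k}$, which is exactly the threshold at which a single-scale local lemma argument closes: the dependency degree among the events $\set{|(A_ky)_i|>b_k}_i$ is at most $(2^{k+1}/\beta)(2^{k+1}\delta/\beta)$, so this is just \Theorem{folklore} applied to $A_k/(\beta 2^{-k})$ with $R=2^{k+1}/\beta$ and $\Delta=2^{k+1}\delta/\beta$ (and $R,\,R\Delta$ are large enough by the hypotheses). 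Because of the factor $2^{-k/2}$, the series $\sum_k b_k$ is dominated by a geometric series of ratio $2^{-1/2}$ and converges to $O(\alpha\sqrt\beta)$; tracking the constants in this geometric sum, together with $\alpha\ge\sqrt2$, is what pins the final bound at $16\alpha\sqrt\beta$.

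The main obstacle is that the per-scale applications cannot be run independently: a single column $a_j$ carries entries at many scales, so $y_j$ appears in events of several scales and the naive dependency degree of an event is unbounded. I expect the resolution to be the asymmetric (lopsided) local lemma: give each scale-$k$ event a weight $x_k\asymp\exp(-\Theta(\alpha^2+k))$ matching its probability, and verify $\prob{B}\le x_k\prod_{B'\sim B}(1-x_{k(B')})$ by checking that $\sum_{B'\sim B}x_{k(B')}$ stays below a small constant --- the point being that the dependence of a scale-$k$ event can pass through at most $2^{k''+1}\delta/\beta$ columns of scale $k''$, and $\sum_{k''}(2^{k''+1}\delta/\beta)\,x_{k''}$ converges since $x_{k''}$ decays faster than $2^{-k''}$. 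This bookkeeping degrades once $k\gtrsim\alpha^2$; the remaining top scales, together with the truncated tail, form a residual non-negative matrix with the same $\delta$ and $\ell_\infty$-bound at most $\beta 2^{-\Omega(\alpha^2)}$, and feeding it back into the theorem costs only $O\!\big(\alpha\sqrt\beta\cdot 2^{-\Omega(\alpha^2)}\big)$, absorbed by the constant. Collecting everything yields $\norm{Ay}_\infty\le\sum_k b_k+(\text{tail})\le 16\alpha\sqrt\beta$.
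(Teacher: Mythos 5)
Your overall architecture --- stratify the entries of $A$ into dyadic scales, bound each scale's contribution by a Hoeffding/Bernstein tail, and glue the scales together with the asymmetric local lemma and a geometric sum --- is exactly the paper's. The gap is in how you close the local lemma across scales, and the patch you propose for it does not work. With a weight $x_k \asymp \exp(-\Theta(\alpha^2+k))$ depending only on the scale, the neighborhood sum of a scale-$k$ event in row $i$ is proportional to the number of scale-$k$ entries in that row, which can be as large as $2^{k+1}/\beta$; the resulting penalty $\Theta(2^k\beta)$ eventually swamps the $\Theta(\alpha^2+k)$ of slack between $\lg(1/p_k)$ and $\lg(1/x_k)$, as you observe. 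But your proposed repair --- peel off the scales $k \gtrsim \alpha^2$ (and the truncated tail) as a residual matrix and ``feed it back into the theorem'' --- is structurally unsound: the theorem applied to the residual produces its \emph{own} sign vector, whereas the residual shares its columns, hence its variables $y_j$, with the coarse scales, so the two colorings cannot be combined. Nor can the residual be dismissed deterministically: its row sums can be as large as $1$ (e.g.\ when every entry of $A$ is below $\beta 2^{-\alpha^2}$ the ``residual'' is all of $A$), so its contribution is not $O(\alpha\sqrt{\beta}\cdot 2^{-\Omega(\alpha^2)})$.

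The paper closes this hole by making both the threshold and the weight of the scale-$k$ event depend on how much mass row $i$ carries at that scale: the threshold is $T_k = \epsilon\sum_{j\in S_{i,k}}A_{i,j} + \alpha 2^{-k/2}$ with $\epsilon = 8\alpha\sqrt{\beta}$, so the Hoeffding bound acquires a factor $\exp(-\epsilon^2\card{S_{i,k}}/8)$ and the weight a factor $\exp(-\epsilon^2\card{S_{i,k}}/16)$; since $\epsilon^2/16 \geq 4\beta$, the leftover $\exp(-\epsilon^2\card{S_{i,k}}/16)$ absorbs the neighborhood penalty $\exp(-4\card{S_{i,k}}\beta)$ at \emph{every} scale, with no truncation and no residual. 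These extra threshold terms cost only $\epsilon\sum_k\sum_{j\in S_{i,k}}A_{i,j} \leq \epsilon$ in total, because the per-scale masses sum to the row sum. If you add such a mass-dependent term to your $b_k$ and correspondingly to your $x_k$, your argument goes through in a single local-lemma application; as written, it does not. (Separately, the final constant $16$ is asserted rather than derived; the paper's accounting gives $\epsilon + 4\alpha\sqrt{2\beta} \leq 16\alpha\sqrt{\beta}$.)
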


\comment{
\begin{proofof}{\Theorem{newthm}}
First we rescale each $v_j$ and divide it into its non-positive and non-negative parts.
Define $v_j^+, v_j^- \in \bR^n$ by
\begin{align*}
(v_j^+)_i &~=~ \max \set{  (v_j)_i/R, 0 } \\
(v_j^-)_i &~=~ \max \set{ -(v_j)_i/R, 0 }
\end{align*}
Let $a_j \in \bR^{2n}$ be the vector obtained by concatenating $v_j^+$ and $v_j^-$,
and let $A$ be the matrix whose $j\th$ column is $a_j$.
Then $\norm{ a_j }_\infty \leq 1/R$, $\norm{ a_j }_1 = \norm{ v_j }_1 / R \leq \Delta/R$,
and $\norm{ A_i }_1 \leq 1$.
Thus we may set $\delta = \Delta/R$ and $\beta = 1/R$.
By \Theorem{weakbeckfiala}, we get a vector $y \in \set{-1,+1}^m$ with
$$
\norm{ Ay }_\infty
    ~=~ O( \alpha \sqrt{\beta} )
    ~=~ O( \sqrt{ \lg( R \Delta ) / R } ).
$$
Scaling up by $R$ and using the triangle inequality,
we obtain a discrepancy bound for $v_1,\ldots,v_m$ that is at most twice as large:
$ \norm{ Vy }_\infty = O( \sqrt{ R \lg( R \Delta )} ) $.
\end{proofof}
}

We now prove \Theorem{weakbeckfiala}.
Suppose we choose the vector $y \in \set{-1,+1}^m$ uniformly at random.
The discrepancy of row $i$ is the value $\abs{\sum_j A_{i,j} y_j}$.
Our goal is to bound $\norm{Ay}_\infty = \max_i \, \abs{\sum_j A_{i,j} y_j}$,
which is the maximum discrepancy of any row.

One annoyance in analyzing $\norm{ A y }_\infty$ is that
the entries of $A$ can have wildly differing magnitudes.
The natural approach is to stratify: 
to partition each row of $A$ into sets whose entries all have roughly the same magnitude.
Define $b := \floor{-\lg \beta} \geq 2$,
so that every entry of every $A$ is at most $2^{-b}$.
For $k \geq b$, let
$$
    S_{i,k} ~=~ \setst{ j }{ \floor{-\lg A_{i,j}} = k }
$$
be the locations of the entries in row $i$ that take values in $(2^{-(k+1)},2^{-k}]$.
%Let $s_{i,k} = \card{S_{i,k}}$.
%Note that $s_{i,k} < 2^{k+1}$ since each row of $A$ sums to $1$.

To bound the discrepancy of row $i$, we will actually bound the discrepancy of each
set $S_{i,k}$ (i.e., $\abs{ \sum_{j \in S_{i,k}} A_{i,j} y_j }$).
By the triangle inequality,
the total discrepancy of row $i$ is at most the sum of the discrepancies of each $S_{i,k}$.

Define
\begin{equation}
\EquationName{epsdef}
\epsilon ~:=~ 8 \alpha \sqrt{\beta} ~>~ 8 \sqrt{\beta}.
\end{equation}
Let $\cE_{i,k}$ be the event that the discrepancy of $S_{i,k}$ exceeds
\begin{equation}
\EquationName{TDef}
    T_k ~:=~ \epsilon \sum_{j \in S_{i,k}} A_{i,j} + \alpha 2^{-k/2}.
\end{equation}
We can analyze the probability of $\cE_{i,k}$ by a Hoeffding bound:
if $\set{X_i}_{i \leq \ell}$ are independent random variables,
each $X_i \in [-1,+1]$,
and $X = X_1+\cdots+X_\ell$,
then $\prob{|X|>a} \leq 2 e^{-a^2/2\ell}$. 
Applying this bound to the discrepancy of $S_{i,k}$, we get that
\begin{align}
    \nonumber
    \prob{\cE_{i,k}}
        &~\leq~ 2\exp\big( - (T_k 2^k)^2 / 2 \card{S_{i,k}} \big) \\\nonumber
        &~<~ 2\exp\Bigg(
            - \frac{\epsilon^2}{2 \card{S_{i,k}} } \Big(2^k \smallsum{j \in S_{i,k}}{} A_{i,j}\Big)^2
            - \frac{2 \epsilon}{2 \card{S_{i,k}} } \alpha 2^{k/2}
            \Big(2^k \smallsum{j \in S_{i,k}}{} A_{i,j}\Big)
            \Bigg) \\
        &~\leq~ 2\exp\Big(
            - \frac{\epsilon^2}{8} \card{S_{i,k}} - \frac{\epsilon}{2} \alpha2^{k/2} 
            \Big)
        ~=:~ p_{i,k},
        \EquationName{pikdef},
\end{align}
where the last inequality uses
$\smallsum{j \in S_{i,k}}{} A_{i,j} \geq 2^{-(k+1)} \card{S_{i,k}}$.

\subsection{Discrepancy assuming no events occur}
Suppose that none of the events $\cE_{i,k}$ happen.
Then the total discrepancy of row $i$ is at most
\begin{align}
\nonumber
\sum_{k \geq b} T_k
    &~=~ \epsilon
        %\underbrace{ \sum_{k \geq b} \sum_{j \in S_{i,k}} A_{i,j} }_{= \sum_{j=1}^m A_{i,j}}
        \sum_{k \geq b} \sum_{j \in S_{i,k}} A_{i,j} 
        + \alpha \sum_{k \geq b} 2^{-k/2} \\\nonumber
    &~\leq~ \epsilon + \alpha \sum_{k \geq b} 2^{-k/2}
        \qquad\text{(since we assume $\smallsum{j=1}{m} A_{i,j} \leq 1$)} \\\nonumber
    &~=~ \epsilon + \alpha \frac{2^{-b/2}}{1-2^{-1/2}} \\\nonumber
    &~\leq~ \epsilon + 4 \alpha \sqrt{2 \beta}
        \qquad\text{(since $2^{-b} \leq 2^{-(\lg(1/\beta)-1)} = 2\beta$)}
        \\\EquationName{nobad}
    &~\leq~ 16 \alpha \sqrt{\beta}.
\end{align}

\subsection{Avoiding the events}
We will use the local lemma to show that, with positive probability,
none of the events $\cE_{i,k}$ occur.
To do so, we must show that these events have limited dependence.
Consider $\cE_{i,k}$, which is the event
that the elements in row $i$ of value roughly $2^{-k}$ have large discrepancy.
This event depends only on the random values $\setst{ y_j }{ j \in S_{i,k} }$.
We will bound the total failure probability of the events that depend on those random values.

The local lemma can be stated as follows \cite[Theorem 5.1.1]{AlonSpencer}:

\begin{theorem}
Let $\cE_1,\ldots,\cE_m$ be events in a probability space.
Let $\Gamma(\cE_i)$ be the events (other than $\cE_i$ itself)
which are not independent of $\cE_i$.
If one can associate a value $x(\cE_i) \in (0,1)$ with each event $\cE_i$ such that
\begin{equation}
\EquationName{LLL}
\prob{\cE_i} ~\leq~ x(\cE_i) \cdot \prod_{\cF \in \Gamma(\cE_i)} \big(1-x(\cF)\big)
\end{equation}
then, with positive probability, no event $\cE_i$ occurs.
\end{theorem}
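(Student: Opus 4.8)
The plan is to prove, by induction on $\card{S}$, the strengthened statement that conditioning on the non-occurrence of any subset of the other events never raises the probability of $\cE_i$ above its assigned weight: for every index $i$ and every subset $S$ of the remaining indices with $i \notin S$,
$$
    \prob{\cE_i \mid \bigcap_{j \in S} \bar{\cE_j}} ~\leq~ x(\cE_i).
$$
The base case $\card{S}=0$ is immediate from the hypothesis \eqref{eq:LLL}, since each factor $1-x(\cF)$ lies in $(0,1)$, so their product is at most $1$ and the bound reduces to $\prob{\cE_i}\leq x(\cE_i)$.

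For the inductive step I would split $S$ into the part $S_1 = S \cap \Gamma(\cE_i)$ on which $\cE_i$ may depend and the complementary part $S_2 = S \setminus S_1$ of events independent of $\cE_i$, and expand the conditional probability as the ratio
$$
    \prob{\cE_i \mid \bigcap_{j \in S}\bar{\cE_j}}
    ~=~
    \frac{\prob{\cE_i \cap \bigcap_{j \in S_1}\bar{\cE_j} \mid \bigcap_{l \in S_2}\bar{\cE_l}}}
         {\prob{\bigcap_{j \in S_1}\bar{\cE_j} \mid \bigcap_{l \in S_2}\bar{\cE_l}}}.
$$
For the numerator, dropping the intersection with the events $\bar{\cE_j}$ ($j \in S_1$) only increases it, and then using the independence of $\cE_i$ from $\set{\cE_l : l \in S_2}$ bounds it above by $\prob{\cE_i} \leq x(\cE_i)\prod_{\cF \in \Gamma(\cE_i)}(1 - x(\cF))$. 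For the denominator I would write it as a telescoping product of conditional probabilities $\prob{\bar{\cE_{j_t}} \mid \cdots}$ ranging over the elements of $S_1$, where each factor conditions on the previously listed elements of $S_1$ together with all of $S_2$; every such conditioning set has size strictly less than $\card{S}$, so the induction hypothesis yields $\prob{\cE_{j_t}\mid\cdots} \leq x(\cE_{j_t})$ and hence the denominator is at least $\prod_{j \in S_1}(1 - x(\cE_j)) \geq \prod_{\cF \in \Gamma(\cE_i)}(1 - x(\cF))$, the last inequality enlarging the product over all of $\Gamma(\cE_i)$ since each factor is in $(0,1)$. Dividing, the product over $\Gamma(\cE_i)$ cancels and leaves exactly $x(\cE_i)$, closing the induction.

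With the inductive claim established, the theorem follows by the chain rule:
$$
    \prob{\bigcap_{i=1}^m \bar{\cE_i}}
    ~=~ \prod_{i=1}^m \prob{\bar{\cE_i} \mid \bigcap_{j < i}\bar{\cE_j}}
    ~\geq~ \prod_{i=1}^m \big(1 - x(\cE_i)\big) ~>~ 0,
$$
which is strictly positive because every $x(\cE_i) \in (0,1)$. The main obstacle is the bookkeeping in the inductive step: one must correctly route the dependent part $S_1$ into the numerator, where independence allows the conditioning on $S_2$ to be discarded, while keeping the independent part $S_2$ in the conditioning of the ratio, and must verify that each conditional probability invoked conditions on strictly fewer than $\card{S}$ events so that the hypothesis applies. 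Degenerate cases — an empty $S_1$ (giving a trivial denominator of $1$) and a zero-probability conditioning event — also warrant a brief check.
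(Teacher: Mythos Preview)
Your argument is the standard proof of the asymmetric Lov\'asz local lemma and is essentially correct. Note, however, that the paper does not supply its own proof of this statement: it is quoted verbatim as Theorem~5.1.1 of Alon--Spencer \cite{AlonSpencer} and used as a black box in the proof of \Theorem{weakbeckfiala}. What you have written is precisely the textbook argument found in that reference, so in that sense your approach matches the paper's (implicit) source.

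One small caveat worth flagging: the step ``using the independence of $\cE_i$ from $\set{\cE_l : l \in S_2}$'' to drop the conditioning on $\bigcap_{l \in S_2}\bar{\cE_l}$ requires \emph{mutual} independence of $\cE_i$ from the entire collection $\set{\cE_l : l \notin \Gamma(\cE_i)\cup\set{i}}$, not merely pairwise independence from each $\cE_l$. The theorem statement in the paper is informal on this point (``events which are not independent of $\cE_i$''), but the intended and standard reading is the mutual one, and your proof correctly relies on it. The remaining bookkeeping you mention --- positivity of the conditioning events and the empty-$S_1$ case --- is routine and follows from the induction itself, since each conditional probability you produce is strictly below~$1$.
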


The weight that we assign to $\cE_{i,k}$ is 
\begin{equation}
\EquationName{weightdef}
    x(\cE_{i,k})
        ~:=~ 2 \exp\big( - \epsilon^2 \card{S_{i,k}}/16 - \epsilon \alpha 2^{k/2} /2 \big).
\end{equation}
Comparing to \eqref{eq:pikdef}, we see that this value is closely related to
(but slightly larger than) $p_{i,k}$,
which is our upper bound on the probability of $\cE_{i,k}$.

\begin{claim}
\ClaimName{xsmall}
$x(\cE_{i,k}) < 1/2$ for every $i \in [n]$ and $k \geq b$.
\end{claim}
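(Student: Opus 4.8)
\textbf{Proof proposal for Claim~\ref{clm:xsmall}.}
The plan is to show that the exponent in~\eqref{eq:weightdef} already exceeds $\ln 4$ thanks to its second summand alone, so that $x(\cE_{i,k}) = 2\exp(-\cdots) < 2 \cdot \tfrac14 = \tfrac12$. Concretely, since $\epsilon^2 \card{S_{i,k}}/16 \geq 0$, it suffices to prove
$$
    \frac{\epsilon \alpha 2^{k/2}}{2} ~>~ \ln 4.
$$
First I would substitute the definition $\epsilon = 8\alpha\sqrt{\beta}$ from~\eqref{eq:epsdef}, which turns the left-hand side into $4\alpha^2 \sqrt{\beta}\, 2^{k/2}$.

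Next I would lower-bound $2^{k/2}$. Since $k \geq b = \floor{-\lg\beta}$ and (because $\beta \leq 1/4$) this floor satisfies $b > -\lg\beta - 1$, we get $2^b > 2^{-\lg\beta - 1} = 1/(2\beta)$, hence $2^{k/2} \geq 2^{b/2} > 1/\sqrt{2\beta}$. Plugging this in,
$$
    \frac{\epsilon \alpha 2^{k/2}}{2}
    ~=~ 4\alpha^2 \sqrt{\beta}\, 2^{k/2}
    ~>~ \frac{4\alpha^2 \sqrt{\beta}}{\sqrt{2\beta}}
    ~=~ 2\sqrt{2}\,\alpha^2.
$$
Finally, I would invoke $\alpha \geq \sqrt{2}$ (so $\alpha^2 \geq 2$), giving $2\sqrt{2}\,\alpha^2 \geq 4\sqrt{2} > \ln 4$, which completes the argument.

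There is really no substantive obstacle here: the claim reduces to a one-line inequality chain. The only point requiring a little care is getting the direction of the floor estimate right — one needs the lower bound $b > -\lg\beta - 1$ (equivalently $2^{-b} < 2\beta$) rather than the upper bound $2^{-b} \geq \beta$ used in~\eqref{eq:nobad} — and confirming that the hypotheses $\beta \leq \min\{\delta/2,1/4\}$ and $\delta \leq 1$ indeed guarantee $\alpha = \sqrt{\lg(\delta/\beta^2)} \geq \sqrt{2}$, which is already asserted in the statement of \Theorem{weakbeckfiala}.
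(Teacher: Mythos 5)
Your proposal is correct and follows essentially the same route as the paper: drop the nonnegative term $\epsilon^2 \card{S_{i,k}}/16$, lower-bound $2^{k/2} \geq 2^{b/2} \geq 1/\sqrt{2\beta}$ via the floor estimate, and substitute $\epsilon = 8\alpha\sqrt{\beta}$ with $\alpha \geq \sqrt{2}$ (the paper simply uses $\alpha \geq 1$ and the threshold $e^{-\sqrt{2}} < 1/4$ in place of your $e^{-\ln 4}$). The only quibble is in your commentary: the bound $2^{-b} \leq 2\beta$ you need here is in fact the same one already used in \eqref{eq:nobad}, not a different direction of the floor estimate.
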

\begin{proof}
By \eqref{eq:epsdef} we have $\epsilon > 4 \sqrt{\beta}$, so
$$
    \epsilon 2^{k/2}
        ~\geq~ \epsilon \sqrt{2^b}
        ~\geq~ \epsilon \sqrt{2^{\lg(1/\beta)-1}}
        ~\geq~ \epsilon \sqrt{1/2\beta}
        ~>~ 2 \sqrt{2}.
$$
It follows that $x(\cE_{i,k}) \leq 2 \exp( - \epsilon 2^{k/2}/2) < 2 \exp( - \sqrt{2} ) < 1/2$.
\end{proof}

Our next step is to characterize $\Gamma(\cE_{i,k})$,
the events that are dependent on $\cE_{i,k}$.
We let $\cC_{j,k}$ be the events corresponding to all entries 
of value roughly $2^{-k}$ in the $j\th$ column.
$$
    \cC_{j,k} ~\defeq~ \setst{ \cE_{i,k} }{ \floor{-\lg A_{i,j}} = k }
    \qquad(\text{for } j \in [m],~ k \geq b) 
$$
Next, $\cY_j$ contains all events corresponding to all entries
in the $j\th$ column. In other words, $\cY_j$ is the set of all events
that depend on the random variable $y_j$.
$$
\cY_{j}
    ~\defeq~ \bigcup_{k \geq b} \cC_{j,k}
    ~=~ \setst{ \cE_{i,\floor{-\lg A_{i,j}}} }{ i \in [n] }
    \qquad(\text{for } j \in [m])
$$
Finally, since $\cE_{i,k}$ depends only on 
the random labels of elements in $S_{i,k}$,
the set $\Gamma(\cE_{i,k})$ consists of all events that depend on any of those labels.
$$
\Gamma(\cE_{i,k}) ~=~ \Union_{j \in S_{i,k}} \cY_{j}.
$$
%For convenience, we also let $r_{j,k} = \card{\cC_{j,k}}$.

\begin{claim}
\ClaimName{LLLworks}
For every event $\cE_{i,k}$, inequality \eqref{eq:LLL} is satisfied.
\end{claim}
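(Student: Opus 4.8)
The plan is to verify \eqref{eq:LLL} by unwinding the definitions and reducing everything to a clean per-column estimate. Comparing $p_{i,k}$ from \eqref{eq:pikdef} with the weight $x(\cE_{i,k})$ from \eqref{eq:weightdef}, the two agree except for the coefficient of $\card{S_{i,k}}$ in the exponent, so $p_{i,k}=x(\cE_{i,k})\cdot\exp(-\epsilon^2\card{S_{i,k}}/16)$. Since $\prob{\cE_{i,k}}\le p_{i,k}$, it suffices to show $\exp(-\epsilon^2\card{S_{i,k}}/16)\le\prod_{\cF\in\Gamma(\cE_{i,k})}(1-x(\cF))$. By \Claim{xsmall} every $x(\cF)<1/2$, so the elementary inequality $1-t\ge e^{-2t}$ on $[0,1/2]$ bounds the right side below by $\exp(-2\sum_{\cF\in\Gamma(\cE_{i,k})}x(\cF))$, and it is enough to prove
\[
    \sum_{\cF\in\Gamma(\cE_{i,k})}x(\cF)~\le~\frac{\epsilon^2\,\card{S_{i,k}}}{32}.
\]

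Next I would use $\Gamma(\cE_{i,k})=\Union_{j\in S_{i,k}}\cY_j$: since all weights are positive, the left side is at most $\sum_{j\in S_{i,k}}\sum_{\cF\in\cY_j}x(\cF)$, so it suffices to establish the per-column bound $\sum_{\cF\in\cY_j}x(\cF)\le\epsilon^2/32=2\alpha^2\beta$ for every column $j$ (recalling $\epsilon^2=64\alpha^2\beta$) and sum over the $\card{S_{i,k}}$ columns in $S_{i,k}$.

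To handle a single column $j$, note each $\cF\in\cY_j$ has the form $\cE_{i',k'}$ with $k'=\floor{-\lg A_{i',j}}$, so $A_{i',j}>2^{-(k'+1)}$, hence $2^{k'/2}>(2A_{i',j})^{-1/2}$; discarding the nonnegative $\card{S_{i',k'}}$ term in \eqref{eq:weightdef} gives $x(\cF)<2\exp\!\big(-\tfrac{\epsilon\alpha}{2\sqrt{2A_{i',j}}}\big)$. With $a_{i'}:=A_{i',j}$ and $c:=\epsilon\alpha/(2\sqrt2)$, I must bound $\sum_{i'}2e^{-c/\sqrt{a_{i'}}}$ subject to $\sum_{i'}a_{i'}\le\delta$ and $a_{i'}\le\beta$. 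A short calculation shows $\tfrac{d}{da}\log(e^{-c/\sqrt a}/a)=\tfrac1a(\tfrac{c}{2\sqrt a}-1)>0$ for $a<c^2/4$, and $c^2/4=2\alpha^4\beta>\beta$ since $\epsilon=8\alpha\sqrt\beta$ and $\alpha^2\ge2$; thus $e^{-c/\sqrt a}/a$ is increasing on $(0,\beta]$, so $e^{-c/\sqrt{a_{i'}}}\le(a_{i'}/\beta)e^{-c/\sqrt\beta}$ and
\[
    \sum_{\cF\in\cY_j}x(\cF)~<~\frac{2e^{-c/\sqrt\beta}}{\beta}\sum_{i'}a_{i'}~\le~\frac{2\delta}{\beta}\exp\!\Big(-\frac{\epsilon\alpha}{2\sqrt{2\beta}}\Big).
\]

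Finally, substituting $\epsilon=8\alpha\sqrt\beta$ turns the exponent into $-2\sqrt2\,\alpha^2$, and using $\alpha^2=\lg(\delta/\beta^2)$ gives $e^{2\sqrt2\alpha^2}=(\delta/\beta^2)^{2\sqrt2/\ln2}\ge(\delta/\beta^2)^4$, since $2\sqrt2/\ln2>4$ and $\delta/\beta^2\ge4$ (from $\beta\le\delta/2\le1/2$ and $\delta\le1$). The target inequality $\tfrac{2\delta}{\beta}e^{-2\sqrt2\alpha^2}\le2\alpha^2\beta$ then reduces to $\alpha^2\delta^3\ge\beta^6$, which holds since $\beta^6\le(\delta/2)^6=\delta^6/64\le\delta^3/64\le\alpha^2\delta^3$. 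The conceptual steps are the per-column reduction and the monotonicity lemma for $e^{-c/\sqrt a}/a$; the main obstacle is bookkeeping — making sure the constant $16$ in \Theorem{weakbeckfiala}, the choice $\epsilon=8\alpha\sqrt\beta$, and the hypotheses $\beta\le\delta/2\le1/4$, $\alpha^2\ge2$ combine so this final chain of inequalities closes with room to spare.
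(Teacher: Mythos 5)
Your proof is correct. The overall skeleton coincides with the paper's: reduce \eqref{eq:LLL} to a bound on $\sum_{\cF\in\Gamma(\cE_{i,k})}x(\cF)$ via $1-t\ge e^{-2t}$ and the identity $p_{i,k}=x(\cE_{i,k})e^{-\epsilon^2\card{S_{i,k}}/16}$, then decompose $\Gamma(\cE_{i,k})$ into the column classes $\cY_j$. (The paper phrases the final accounting slightly differently --- it proves $\sum_{\cF\in\cY_j}x(\cF)\le 2\beta$ and then separately checks $\epsilon^2/16+4\beta\le\epsilon^2/8$ using $\epsilon\ge 8\sqrt\beta$ --- but that is the same reduction as your target $\sum_{\cF\in\cY_j}x(\cF)\le\epsilon^2/32$.) Where you genuinely diverge is the per-column estimate. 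The paper stratifies column $j$ into the classes $\cC_{j,k}$, proves the ad hoc inequality \eqref{eq:eps2k} (namely $\epsilon\alpha 2^{k/2}/2\ge k+\lg(\delta/\beta)$), and bounds $\card{\cC_{j,k}}\cdot 2^{-k}$ by twice the corresponding piece of the column sum. You instead bound each weight directly by $2e^{-c/\sqrt{A_{i',j}}}$ with $c=\epsilon\alpha/(2\sqrt 2)$ and observe that $a\mapsto e^{-c/\sqrt a}/a$ is increasing on $(0,\beta]$ (valid since $c^2/4=2\alpha^4\beta>\beta$), which linearizes the bound in $A_{i',j}$ and lets the column constraint $\norm{a_j}_1\le\delta$ finish the job. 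Your route trades the stratified counting and \eqref{eq:eps2k} for a small calculus lemma; it yields the per-column bound $\tfrac{2\delta}{\beta}e^{-2\sqrt2\alpha^2}\le 2\alpha^2\beta=\epsilon^2/32$ rather than the paper's sharper $2\beta$, but that is all the argument needs. The remaining numerical verifications ($2\sqrt2/\ln 2>4$, $\delta/\beta^2\ge 4$, $\beta^6\le\alpha^2\delta^3$) all check out, so the chain closes as claimed.
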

\begin{proof}
The main goal of the proof is to give a good lower bound for
$\prod_{\cF \in \Gamma(\cE_{i,k})} (1-x(\cF))$.
\Claim{xsmall} shows that $x(\cF) \leq 1/2$, so
\begin{equation}
\EquationName{prodtoexp}
\prod_{\cF \in \Gamma(\cE_{i,k})} (1-x(\cF))
    ~\geq~ \prod_{\cF \in \Gamma(\cE_{i,k})} \exp( -2 x(\cF))
    ~=~ \exp\Bigg( - 2 \sum_{\cF \in \Gamma(\cE_{i,k})} x(\cF) \Bigg).
\end{equation}
So it suffices to give a good upper bound for $\sum_{\cF \in \Gamma(\cE_{i,k})} x(\cF)$.

First we need to derive an inequality that is rather brutal, but suffices for our proof.
\begin{align}\nonumber
    \epsilon \cdot \alpha 2^{k/2} / 2
    &~=~ 8 \alpha \sqrt{\beta} \cdot \alpha 2^{k/2} /2 
        \qquad\text{(by \eqref{eq:epsdef})} \\\nonumber
    &~=~ \alpha^2 \cdot 2 \sqrt{\beta} \cdot 2^{1+b/2 + (k-b)/2} \\\nonumber
    &~=~ \lg(\delta/\beta^2) \cdot \big(2 \sqrt{\beta} 2^{b/2} \big) \cdot 2^{1+(k-b)/2} \\\nonumber
    &~\geq~ \big(b+\lg(\delta/\beta)\big) \cdot 2^{1+(k-b)/2}
        \qquad\text{(since $\lg(1/\beta) \geq b$ and $2^{b/2} \geq \sqrt{1/2\beta}$)}
        \\\nonumber
    &~\geq~ \big(b+\lg(\delta/\beta)\big) + 2^{1+(k-b)/2}
        \qquad\text{(since $xy \geq x+y$ if $x,y \geq 2$)}\\\nonumber
    &~\geq~ \big(b+\lg(\delta/\beta)\big) + (k-b)
        \qquad\text{(since $2^{1+i/2} \geq i$ for all $i \geq 0$)}
        \\\EquationName{eps2k}
    &~=~ k + \lg(\delta/\beta)
\end{align}
Next, consider all the events that depend on $y_j$.
Then
\begin{align*}
\sum_{\cF \in \cY_j} x(\cF)
    &~=~ \sum_{k \geq b} \sum_{\cF \in \cC_{j,k}} x(\cF) \\
    &~\leq~ \sum_{k \geq b} \sum_{\cF \in \cC_{j,k}} \exp( - \epsilon \alpha 2^{k/2} /2 )
        \qquad\text{(by \eqref{eq:weightdef})}\\
    &~\leq~ \sum_{k \geq b} \card{\cC_{j,k}} \cdot e^{-(k+\lg(\delta/\beta))}
        \qquad\text{(by \eqref{eq:eps2k})}\\
    &~\leq~ \sum_{k \geq b}
        \Big| \setst{i}{A_{i,j} \in (2^{-k-1},2^{-k}] } \Big| \cdot 2^{-(k+\lg(\delta/\beta))} \\
    &~\leq~ (\beta/\delta) \cdot (2\delta) ~=~ 2 \beta,
\end{align*}
since the $j\th$ column sums to $\delta$.
Therefore
$$
    \sum_{\cF \in \Gamma(\cE_{i,k})} x(\cF)
        ~=~ \sum_{j \in S_{i,k}} \sum_{\cF \in \cY_j} x(\cF)
        ~\leq~ 2 \card{S_{i,k}} \beta.
$$
Combining this with \eqref{eq:prodtoexp}, we obtain the lower bound
\begin{align*}
x(\cE_{i,k}) \cdot \prod_{\cF \in \Gamma(\cE_{i,k})} (1-x(\cF))
    &~\geq~ x(\cE_{i,k}) \cdot \exp\Bigg( - 2 \sum_{\cF \in \Gamma(\cE_{i,k})} x(\cF) \Bigg) \\
    &~\geq~ 2 \exp\big( - \epsilon^2 \card{S_{i,k}} /16 - \epsilon \alpha 2^{k/2} /2 \big)
        \cdot \exp\big( - 4 \card{S_{i,k}} \beta \big) \\
    &~=~ 2 \exp\Big( - \card{S_{i,k}} (\epsilon^2 /16 + 4 \beta)
        - \epsilon \alpha 2^{k/2} /2 \Big) \\
    &~\geq~ 2 \exp\Big( - \card{S_{i,k}} \epsilon^2/8
        - \epsilon \alpha 2^{k/2} /2 \Big) \\
    &~=~ p_{i,k} ~\geq~ \prob{\cE_{i,k}}
\end{align*}
where the penultimate inequality holds because $\epsilon^2/8 \geq \epsilon^2/16 + 4 \beta$,
which follows because $\epsilon \geq 8 \sqrt{\beta}$ (cf.~\eqref{eq:epsdef}).
This proves \eqref{eq:LLL}.
\end{proof}

The previous claim shows that the hypotheses of the local lemma are satisfied.
So there exists a vector $y \in \set{-1,+1}^m$
such that none of the events $\cE_{i,k}$ hold.
As in \eqref{eq:nobad}, this implies that every row has discrepancy
at most $16 \alpha \sqrt{\beta}$.
In other words, $\norm{A y}_\infty \leq 16 \alpha \sqrt{\beta}$.
This completes the proof of \Theorem{weakbeckfiala}.

%%%%%%%%%%%%%%%%%%%%%%%%%%%%%%%%%%%%%%%%%%%%%%%%%%%%%%%%%%%%%%%%%%%%%%%%%%%%%%%%
\section{Conclusion}

Many discrepancy theorems on hypergraphs have a 
more general statement about the discrepancy of real-valued matrices \cite[\S 4]{BeckSos}.
We have provided another occurrence of this phenomenon
by proving \Theorem{newthm}, which generalizes \Theorem{folklore}.

We are not aware of any result showing that
either Theorem~\ref{thm:folklore} or \ref{thm:newthm}
is optimal. It seems conceivable that the logarithmic factor could be removed.

\begin{conj}
Let $V$ be an $n \times m$ real matrix with $\abs{V_{i,j}} \leq 1$,
$\norm{ v^i }_1 \leq R$, and $\norm{v_j}_1 \leq \Delta$ for all $i \in [n], j \in [m]$.
Assume $R \geq \Delta$.
There exists $y \in \set{-1,+1}^m$ with $\norm{ V y }_\infty \leq O(\sqrt{R})$.
\end{conj}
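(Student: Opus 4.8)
The plan is to abandon the union-bound-over-rows implicit in the local-lemma proof of \Theorem{weakbeckfiala} and argue instead by \emph{partial colouring} and recursion, in the style of Beck's entropy method and the proof of Spencer's theorem. First rescale the columns by $1/\sqrt{\Delta}$; since $\norm{v_j}_2^2 \le \norm{v_j}_1\norm{v_j}_\infty \le \Delta$, the rescaled columns are unit $\ell_2$-vectors, and the goal becomes to find $y \in \set{-1,+1}^m$ with $\norm{\smallsum{j}{} y_j v_j}_\infty = O(\sqrt{R/\Delta})$. (In passing: the conjecture is \emph{implied} by the Koml\'os conjecture, precisely because the rescaled columns are unit $\ell_2$-vectors and $\sqrt{\Delta}\le\sqrt{R}$; so an unconditional proof must use the entry bound and the row bound, not merely the column $\ell_2$-bound.) The core subroutine is a partial-colouring lemma: given the current set $L$ of ``live'' columns ($L=[m]$ initially), produce $z \in \set{-1,0,+1}^L$ with at least $\card{L}/2$ nonzero entries such that $\abs{\smallsum{j \in L}{} V_{i,j}z_j} \le c_i$ for every row $i$, where the thresholds $c_i$ obey an entropy budget $\smallsum{i}{} g(c_i/\sigma_i) \le \card{L}/10$; here $\sigma_i$ is the $\ell_2$-norm of the restriction of the rescaled row $v^i$ to $L$, and $g$ is the usual entropy function ($g(\lambda)$ exponentially small once $\lambda$ is a large constant, $g(\lambda)=\Theta(\log(1/\lambda))$ for small $\lambda$). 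Freezing the nonzero coordinates and recursing on the live columns halves $\card{L}$ per round, so after $O(\log m)$ rounds every column is coloured and the discrepancy of a row is at most the sum of its per-round thresholds.

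The second step is to meet the budget in every round with a per-round threshold $O(\sigma_i) = O(\sqrt{R/\Delta})$ for each row. When $n \le \card{L}$ this is immediate: take $c_i = c\,\sigma_i$ for a suitable constant $c$. When $n > \card{L}$, use the column bound: the total live $\ell_1$-mass satisfies $\smallsum{i}{}\smallsum{j\in L}{}\abs{V_{i,j}} = \smallsum{j\in L}{}\norm{v_j}_1 \le \card{L}\sqrt{\Delta}$, so only $O(\card{L})$ rows can carry enough mass to make $\sigma_i$ comparable to $\sqrt{R/\Delta}$; every other row is controlled trivially by the fact that its discrepancy never exceeds its current $\ell_1$-mass, and it contributes nothing to the budget. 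Moreover, once $\card{L} < R$ one has $\sigma_i \le \sqrt{\card{L}/\Delta}$, which then shrinks geometrically, so those rounds contribute $O(\sqrt{R/\Delta})$ in total.

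The main obstacle is what happens while $\card{L} \ge R$. In these $\Theta(\log(m/R))$ ``plateau'' rounds the bound $\sigma_i \le \sqrt{R/\Delta}$ need not improve from one round to the next, so naively the plateau contributes $\Theta(\sqrt{R/\Delta}\,\log(m/R))$ and a logarithm survives --- precisely the factor the conjecture wants removed. Avoiding it seems to require partial colourings that contract the live $\ell_2$-mass of every heavy row by a constant factor each round, not merely halve the number of live columns. This is plausible --- if a heavy row's mass were evenly spread over many columns then freezing a constant fraction of the columns would typically remove a constant fraction of its mass --- but the pigeonhole/entropy argument gives no control over \emph{which} columns are frozen, and a heavy row concentrated on a few columns could survive intact. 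Forcing a ``mass-contracting'' partial colouring, perhaps via a steerable Gram--Schmidt-walk-type process, or via an invariant that uses the entry and degree bounds to preclude many rows being simultaneously concentrated on few columns, looks like the heart of the matter. I expect a full proof to be hard: when $\Delta = \Theta(R)$ the problem already includes the $\set{0,1}$ case with column sums $\Delta$ and row sums $R=\Theta(\Delta)$, a restricted special case of both the Koml\'os and Beck--Fiala conjectures, for which no logarithm-free bound is known. The one case the scheme settles cleanly is $m = O(R)$: then there are no plateau rounds, and the recursion immediately yields $\norm{Vy}_\infty = O(\sqrt{R})$.
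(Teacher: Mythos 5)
This statement is not a theorem of the paper: it is stated explicitly as an open conjecture, immediately after the remark that the author is ``not aware of any result showing that either Theorem~\ref{thm:folklore} or \ref{thm:newthm} is optimal.'' There is no proof in the paper to compare against, and your proposal does not supply one either --- which you say yourself, candidly and correctly. Your reductions are sound: after rescaling by $1/\sqrt{\Delta}$ the columns are unit $\ell_2$-vectors, so the conjecture is implied by the Koml\'os conjecture, and the case $\Delta = \Theta(R)$ already contains a restricted Beck--Fiala/Koml\'os instance for which no logarithm-free bound is known. So the honest conclusion is that the statement is genuinely open, and a blind attempt that ``fails'' here is the expected outcome.

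The gap you identify is the real one. The partial-colouring recursion gives per-round thresholds $O(\sigma_i) = O(\sqrt{R/\Delta})$, and the rounds with $\card{L} < R$ do sum geometrically; but during the $\Theta(\log(m/R))$ rounds with $\card{L} \geq R$ nothing forces the live $\ell_2$-mass of a heavy row to decay, so the plateau contributes a $\log(m/R)$ factor and you recover only a bound of the same flavour as \Theorem{newthm} (indeed, Banaszczyk-type vector-balancing bounds for unit $\ell_2$ columns give $O(\sqrt{\Delta\log n})$, again with a logarithm). Your diagnosis that one needs a mass-contracting partial colouring --- or some structural use of the entry and row bounds to prevent many rows from concentrating on few columns --- is a fair description of why the problem is hard, not a step that can be patched. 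Two smaller points: the claim that $m = O(R)$ is settled should be flagged as conditional on carrying out the entropy-budget bookkeeping for $n \gg \card{L}$ (the dyadic counting of rows by live $\ell_1$-mass does work out, but it is not written); and note that the paper's own machinery (\Theorem{weakbeckfiala} via the local lemma) is structurally incapable of removing the logarithm, since the factor $\alpha = \sqrt{\lg(\delta/\beta^2)}$ is baked into the event thresholds $T_k$ needed to make the dependency sums converge.
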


Let us mention now the recent discrepancy result of Marcus et al.~\cite{MSS},
which implies a solution to the long-standing Kadison-Singer problem.

\begin{theorem}[Corollary 1.3 of Marcus et al.~\cite{MSS}]
\TheoremName{MSS}
Let $u_1,\ldots,u_m \in \bC^n$ satisfy $\sum_{i=1}^m u_i u_i^* = I$
and $\norm{u_i}_2^2 \leq \delta$ for all $i$.
Then there exists $y \in \set{-1,+1}^m$ such that
$
\norm{ \sum_{i=1}^m y_i u_i u_i^* } ~\leq~ O(\sqrt{\delta}),
$
where $\norm{\cdot}$ is the $\ell_2$-operator norm.
\end{theorem}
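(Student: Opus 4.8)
The statement is the Marcus--Spielman--Srivastava theorem; I do not see how to reach it from the elementary local-lemma machinery of \Theorem{weakbeckfiala}, which unavoidably loses a logarithmic factor and controls an $\ell_\infty$-type quantity rather than an operator norm. My plan is instead to follow the method of interlacing polynomials.

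First I would reduce the signing statement to a one-sided eigenvalue bound for a sum of \emph{independent} rank-one terms. Since $\sum_i y_i u_i u_i^{*} = 2\sum_{i\,:\,y_i = +1} u_i u_i^{*} - I$, it suffices to choose $y$ so that both $\sum_{i\,:\,y_i = +1} u_i u_i^{*}$ and $\sum_{i\,:\,y_i = -1} u_i u_i^{*}$ have operator norm at most $\tfrac12 + O(\sqrt{\delta})$. To produce such a $y$ at random, let the $v_i$ be independent with $v_i = \sqrt{2}\,u_i$ with probability $\tfrac12$ and $v_i = 0$ otherwise, so that $\sum_i \mathbb{E}[v_i v_i^{*}] = I$ and $\mathbb{E}\norm{v_i}_2^2 \le \delta$; a random realization of $(v_i)$ encodes a random signing. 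It then suffices to show that with positive probability $\lmax\big(\sum_i v_i v_i^{*}\big) \le (1+\sqrt{\delta})^2$, together with the symmetric bound for the complementary sum.

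Next I would pass to expected characteristic polynomials. Consider the random polynomial $\chi(x) = \det\big(xI - \sum_i v_i v_i^{*}\big)$ and its conditional expectations as the $v_i$ are revealed one at a time. The structural heart is to show these conditional expectations form an \emph{interlacing family}: every polynomial in this tree is real-rooted, which follows from real stability of the determinant polynomial $\det\big(xI + \sum_i z_i \mathbb{E}[v_iv_i^{*}]\big)$ together with closure of real stability under the operators $(1-\partial_{z_i})$ and under setting coordinates to real values. Granting this, an interlacing/pigeonhole argument yields a single realization of $(v_i)$ whose characteristic polynomial has largest root no larger than the largest root of the fully averaged polynomial $p(x) = \mathbb{E}\det\big(xI - \sum_i v_i v_i^{*}\big)$, and $p$ is exactly the mixed characteristic polynomial $\big(\prod_i (1-\partial_{z_i})\big)\det\big(xI + \sum_i z_i \mathbb{E}[v_iv_i^{*}]\big)\big|_{z=0}$.

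Finally I would bound the largest root of $p$ by running the multivariate barrier-function argument of \cite{MSS}: one maintains an upper bound on the roots through a barrier functional, shows that each application of an operator $(1-\partial_{z_i})$ moves this bound by an amount governed by $\mathbb{E}\norm{v_i}_2^2 \le \delta$, and concludes after $m$ steps that the largest root is at most $(1+\sqrt{\delta})^2$. The main obstacle is precisely this last step: the barrier bookkeeping and the real-stability closure properties form the genuinely technical core, whereas the interlacing-family reduction is comparatively formal once real-rootedness has been established.
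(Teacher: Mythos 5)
The paper does not prove \Theorem{MSS} at all: it is imported verbatim as Corollary~1.3 of \cite{MSS} and used only for comparison in the concluding section, so there is no internal proof to measure your attempt against. You are right that the local-lemma machinery behind \Theorem{weakbeckfiala} cannot reach it --- that argument loses the factor $\alpha$ and controls only a diagonal ($\ell_\infty$-type) discrepancy, not an operator norm --- and your outline is a faithful summary of the actual Marcus--Spielman--Srivastava proof: interlacing families of conditional expected characteristic polynomials, real-rootedness via real stability and closure under the operators $(1-\partial_{z_i})$, and the multivariate barrier bound $(1+\sqrt{\delta})^2$ on the largest root of the mixed characteristic polynomial. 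Be clear, though, that as written this is a roadmap rather than a proof: the two genuinely hard pillars (real-rootedness of every polynomial in the tree, and the barrier bookkeeping) are invoked as black boxes, as you yourself acknowledge.

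One concrete slip in your reduction deserves fixing. Taking $v_i=\sqrt{2}\,u_i$ with probability $1/2$ and $v_i=0$ otherwise lets the one-sided eigenvalue theorem control $\lmax$ of $\sum_{i:y_i=+1}u_iu_i^*$ with positive probability, but the analogous event for the complementary sum $\sum_{i:y_i=-1}u_iu_i^*$ is a different positive-probability event, and nothing guarantees the two intersect. The standard remedy --- and the one \cite{MSS} use to derive their Corollary~1.3 from their Theorem~1.4 --- is to lift to $\bC^{2n}$, letting $w_i$ equal $(\sqrt{2}u_i,0)$ or $(0,\sqrt{2}u_i)$ each with probability $1/2$; then $\sum_i w_iw_i^*$ is block diagonal with the two partial sums as its blocks, $\sum_i \mathbb{E}[w_iw_i^*]=I_{2n}$, and a single application of the one-sided bound (with $\delta$ replaced by $2\delta$, harmless inside $O(\sqrt{\delta})$) controls both halves simultaneously.
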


There is a relationship between 
Theorems~\ref{thm:weakbeckfiala} and \ref{thm:MSS},
in the sense that both are implied by the following conjecture.
\Theorem{MSS} is the special case where each $A_i$ has rank one,
and \Theorem{weakbeckfiala} implies (ignoring the additional logarithmic factor $\alpha$)
the special case where each $A_i$ is a diagonal matrix.

\begin{conj}
Let $A_1,\ldots,A_m$ be Hermitian, positive semi-definite matrices of the same size
satisfying $\sum_{i=1}^m A_i = I$ and $\operatorname{tr} A_i \leq \delta$ for all $i$.
There exists $y \in \set{-1,+1}^m$ with $\norm{ \sum_{i=1}^m y_i A_i } \leq O(\sqrt{\delta})$.
\end{conj}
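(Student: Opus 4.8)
\emph{A proof proposal.} The natural line of attack is the method of interlacing polynomials, which is exactly how \Theorem{MSS} was proved. For $y \in \set{-1,+1}^m$ write $q_y(x) := \det\!\big(xI - \sum_i y_i A_i\big)$, the characteristic polynomial of the Hermitian matrix $\sum_i y_i A_i$, and set
$$
q(x) ~:=~ \mathbb{E}_{y \in \set{-1,+1}^m}\!\big[\, q_y(x) \,\big].
$$
Since $y$ and $-y$ are equidistributed, $q(-x) = (-1)^n q(x)$, so the roots of $q$ are symmetric about $0$ and any upper bound $r$ on the largest root is automatically a two-sided bound $[-r,r]$. The plan is to show (i) that $\set{q_y}_y$ is an interlacing family in the sense of Marcus--Spencer--Srivastava, and (ii) that the largest root of $q$ is at most $c\sqrt{\delta}$ for an absolute constant $c$. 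Granting both, the interlacing-family theorem produces a specific $y \in \set{-1,+1}^m$ all of whose $q_y$-roots lie in $[-c\sqrt\delta,c\sqrt\delta]$, which says precisely $\norm{\sum_i y_i A_i} \leq c\sqrt\delta$.

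For part (i), the structural input is that $\det\!\big(xI + \sum_i z_i A_i\big)$ is a real stable polynomial in $(x,z_1,\dots,z_m)$ --- a linear pencil in the Hermitian positive semidefinite matrices $I, A_1, \dots, A_m$, by a theorem of Borcea and Br\"anden. One then wants to mimic the standard argument that the conditional expectations $\mathbb{E}_{y_{k+1},\dots,y_m}[q_y(x)]$ are real-rooted and that the two children obtained by fixing $y_k = \pm1$ share a common interlacer. When the $A_i$ are rank one this is easy: $\det(xI - \sum_i y_i u_iu_i^*)$ is multi-affine in the $y_i$, so each conditional expectation is gotten simply by the substitution $y_i \leftarrow 0$, and real stability survives real specialization. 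In our setting $q_y$ has degree $\operatorname{rank}(A_i)$ in $y_i$, and the clean ``substitute $0$'' identity is lost.

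For part (ii), I would run the Marcus--Spencer--Srivastava multivariate barrier argument: express $q(x)$ as a differential operator built from the $A_i$ applied to $\det(xI + \sum_i z_i A_i)$ and evaluated on the diagonal, and track the degradation of a barrier functional as each of the $m$ ``steps'' is taken. The hypothesis $\operatorname{tr} A_i \le \delta$ controls the size of each step (it also gives $\norm{A_i} \le \operatorname{tr} A_i \le \delta$, since $A_i \succeq 0$), while $\sum_i A_i = I$ controls the aggregate; together these should pin the top root below $O(\sqrt\delta)$ just as $\sum_i u_iu_i^* = I$ with $\norm{u_i}^2 \le \delta$ does in \Theorem{MSS}. (A softer approach --- a matrix Banaszczyk or Lovett--Meka-style random walk keeping $\norm{\sum_i x_i A_i}$ small, using $\mathbb{E}\norm{\sum_i g_i A_i} \lesssim \sqrt{\delta \log n}$ from matrix concentration --- would give only $O(\sqrt{\delta\log n})$; removing that $\sqrt{\log n}$ is exactly what the algebraic method buys in the rank-one case, which is why I would pursue it here.)

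\textbf{Main obstacle.} The decisive difficulty is the non-multi-affinity caused by matrices of rank $>1$: a single sign $y_i$ flips an entire rank-$r_i$ block at once, and one cannot split it into $r_i$ independent rank-one flips --- doing so would only prove the weaker bound $O(\sqrt{\max_i \norm{A_i}})$ with \emph{independent} signs, discarding the trace hypothesis that makes the conjecture nontrivial. Reconciling the correlated sign with the interlacing/real-stability machinery --- finding the correct higher-degree analogue of the ``substitute $0$'' identity that still yields a real-rooted expected characteristic polynomial amenable to a barrier-type root bound --- is where I expect a genuinely new idea to be required.
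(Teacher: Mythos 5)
This statement is posed in the paper as a \emph{conjecture}: the author offers no proof and presents it as open, noting only that \Theorem{MSS} is its rank-one case and \Theorem{weakbeckfiala} is (up to the factor $\alpha$) its diagonal case. Your submission is likewise not a proof, by your own account: neither part (i) nor part (ii) of your plan is established for matrices of rank greater than one, and you correctly identify the failure of multi-affinity as the place where a new idea is needed. That honest assessment is right, so the verdict is simply that a genuine gap remains.

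There is, however, a more basic defect in the plan, visible already in the rank-one case you call ``easy.'' For $A_i = u_iu_i^*$ the polynomial $\det(xI - \sum_i y_i u_iu_i^*)$ is multi-affine in $y$, so averaging over uniform signs substitutes $y_i \leftarrow 0$ and yields $q(x) = \mathbb{E}_y q_y(x) = \det(xI) = x^n$, whose largest root is $0$. Even if $\set{q_y}_y$ were an interlacing family, the conclusion would be that some signing has largest \emph{eigenvalue} at most $0$, which says nothing about $\norm{\sum_i y_i A_i}$ (take $m=n=1$, $A_1=1$: the selected signing is $y_1=-1$, with norm $1$). Your symmetry observation does not repair this: $q(-x)=(-1)^n q(x)$ makes the roots of the \emph{average} symmetric about $0$, but the interlacing-family theorem selects a single $q_y$ whose roots need not be symmetric, so an upper bound on its largest root gives no lower bound on its smallest. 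This is precisely why Marcus et al.~\cite{MSS} do not derive \Theorem{MSS} from $\mathbb{E}_y \det(xI-\sum_i y_iu_iu_i^*)$; they prove a two-block partition theorem, bounding the largest roots of both blocks simultaneously and using $\sum_{S_1}+\sum_{S_2}=I$ to convert that into a two-sided norm bound, via the mixed characteristic polynomial of \emph{independent random} rank-one summands. A viable attack on the conjecture should start from that formulation (independent random PSD summands equal to $2A_i$ placed into one of two diagonal blocks), after which the higher-rank real-rootedness and barrier questions you raise become the genuine open content; later work of Cohen and of Br\"and\'en on higher-rank Kadison--Singer addresses exactly this point via hyperbolic polynomials, but nothing of the sort appears in, or is claimed by, this paper.
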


%Theorem~\ref{thm:weakbeckfiala} 
%Theorem~\ref{thm:MSS} 
%The following question, if true, would provide
%a common generalization of Theorems~\ref{thm:weakbeckfiala}~and~\ref{thm:MSS}.

%%%%%%%%%%%%%%%%%%%%%%%%%%%%%%%%%%%%%%%%%%%%%%%%%%%%%%%%%%%%%%%%%%%%%%%%%%%%%%%%

\end{document}